\numberwithin{equation}{section}
\theoremstyle{plain}
\newtheorem{thm}{Theorem}[section]
\newtheorem{coro}[thm]{Corollary}
\newtheorem{lem}[thm]{Lemma}
\theoremstyle{definition}
\theoremstyle{remark}
\newtheorem{rem}[thm]{Remark}
\newcommand\R{{\mathbb R}}
\newcommand\s{{\mathbb S}}
\newcommand{\ib}{\int_{B_R}}
\newcommand{\bs}{\begin{split}}
\newcommand{\es}{\end{split}}
\newcommand\vep{\varepsilon}
\newcommand\dd{\,\mbox{d} }
\def\F{{\cal F}}
\def\P{{\cal P}}
\def\div{\mbox{div} }
\title{A dual Moser-Onofri inequality and its extensions to higher dimensional spheres}
\author {Martial Agueh \thanks{\scriptsize  Department of Mathematics and Statistics
University of Victoria, Victoria, BC, V8W 3R4, CANADA,  \texttt{agueh@math.uvic.ca}.} \; 
Shirin Boroushaki  \thanks{\scriptsize  Department of Mathematics, 1984 Mathematics Road,
University of British Columbia, BC, V6T 1Z2, CANADA
  \texttt{shirinbr@math.ubc.ca}.}\; 
 and Nassif Ghoussoub \thanks{\scriptsize Department of Mathematics, 1984 Mathematics Road,
  University of British Columbia, BC, V6T 1Z2, CANADA
\texttt{nassif@math.ubc.ca}}
}
\date{\footnotesize Dedicated to Dominique Bakry on the occasion of his 60th birthday}
\begin{document}

\maketitle

\begin{abstract} We use optimal mass transport to provide a new proof and a dual formula to the Moser-Onofri inequality on $\s^2$ in the same spirit as the approach of Cordero-Erausquin, Nazaret and Villani \cite{CNV} to the Sobolev inequality and of  Agueh-Ghoussoub-Kang \cite{AGK} to more general settings. There are however many hurdles to overcome once a stereographic projection on $\R^2$ is performed:  Functions are not necessarily of compact support, hence boundary terms need to be evaluated. Moreover, the corresponding dual  free energy of the reference probability density $\mu_2(x)=\frac{1}{\pi(1+|x|^2)^2}$ is not finite on the whole space, which requires the introduction of a renormalized free energy into the dual formula.  We also extend this duality to higher dimensions and establish an extension of the Onofri inequality to spheres $\s^n$ with $n\geq 2$. What is remarkable is that the corresponding free energy is again given by $F(\rho)=-n\rho^{1-\frac{1}{n}}$, which means that both the {\it prescribed scalar curvature problem} and the {\it prescribed Gaussian curvature problem} lead essentially to the same dual problem whose extremals are stationary solutions of the fast diffusion equations.  

\end{abstract}

\textbf{Keywords:} Onofri inequality, duality, mass transport, prescribed Gaussian curvature, fast diffusion.\\

\textbf{Mathematics subject classification:} 39B62, 35J20, 34A34, 34A30.


\section{Introduction}
One of the equivalent forms of Moser's inequality \cite{moser} on the 2-dimensional sphere $\s^2$ states that the functional 
\begin{equation}\label{eqn 2sphere-onofri}
J(u):= \frac{1}{4}\int_{\s^2} |\nabla u|^2\dd\omega - \int_{\s^2} u\dd\omega- \log\left(\int_{\s^2} e^u\dd\omega\right) 
\end{equation}
is bounded below on $ H^1(\s^2)$,  
where 
$\dd\omega$ is the Lebesgue measure on $\s^2$, normalized so that $\int_{\s^2} \dd\omega=1$.  Later, Onofri \cite{onofri} showed that the infimum is actually zero, and that modulo conformal transformations, $u=0$ is the optimal function. Note that this inequality 
is related to the ``prescribed Gaussian curvature" problem on $\s^2$, 
\begin{equation}\label{eqn K-gaussian curvature}
\Delta u + K(x)e^{2u} =1 \quad \mbox{on} \;\; \s^2,
\end{equation}
where $K(x)$ is the Gaussian curvature associated to the metric $g=e^{2u}g_0$ on $\s^2$, and $\Delta = \Delta_{g_0}$ is the Laplace-Beltrami operator corresponding to the standard metric $g_0$. Finding $g$ for a given $K$ leads to solving (\ref{eqn K-gaussian curvature}). Variationally, this reduces to minimizing the functional 
\begin{equation}\label{eqn1}
\F(u)=\int_{\s^2} |\nabla u|^2\frac{\dd V_0}{4\pi} + 2\int_{\s^2} u\frac{\dd V_0}{4\pi} - \log\left(\int_{\s^2} K(x)e^{2u}\frac{\dd V_0}{4\pi}\right) \quad {\rm on}\;  H^1(\s^2),
\end{equation}
where the volume form is such that $\int_{\s^2} \dd V_0=4\pi$.  Onofri's result says that, modulo conformal transformations, $u\equiv 0$ is the only solution of the ``prescribed Gaussian curvature'' problem (\ref{eqn K-gaussian curvature}) for $K=1$, i.e., 
$\frac{1}{2}\Delta u + e^u = 1$ on $ \s^2, $
which after rescaling, $u\mapsto 2u$,  gives
\begin{equation}\label{eqn 2-gaussian curvature}
\Delta u + e^{2u} = 1 \quad \mbox{on} \;\; \s^2. 
\end{equation}
The proof given by Onofri in \cite{onofri} makes use of an ``exponential'' Sobolev inequality due to Aubin \cite{aubin} combined with the invariance of the functional  (\ref{eqn 2sphere-onofri}) under conformal transformations. Other proofs were given by Osgood-Philips-Sarnak \cite{OPS} and by Hong \cite{hong}. See also Ghoussoub-Moradifam \cite{GM}.

In this paper we shall give a proof based on mass transport. While this approach has by now become standard, there are many reasons why it hasn't been so far spelled out. The first originates in the fact that, unlike the case of $\R^n$,  optimal mass transport on the sphere is harder to work with. To avoid this difficulty, we use an equivalent formulation of the Onofri inequality (\ref{eqn 2sphere-onofri}), that we call  the Euclidean Onofri inequality on $\R^2$, namely
\begin{equation}\label{eqn 2euclidean-onofri}
\frac{1}{16\pi}\int_{\R^2} |\nabla u|^2\dd x  + \int_{\R^2} u\dd\mu_2 -\log\left(\int_{\R^2} e^u\dd\mu_2\right) \geq 0  \quad \forall u\in H^1(\R^2),
\end{equation}
which is obtained by projecting (\ref{eqn 2sphere-onofri}) on $\R^2$ via the stereographic projection with respect to the North pole $N=(0,0,1)$, i.e., $\Pi: \s^2 \rightarrow \R^2$, $\Pi(x):=\left(\frac{x_1}{1-x_3}, \frac{x_2}{1-x_3}\right)$ where $x=(x_1,x_2,x_3)$. Here $\mu_2$ is the probability density on $\R^2$ defined by $\mu_2(x)=\frac{1}{\pi(1+|x|^2)^2}$, and $\dd\mu_2 = \mu_2(x)\dd x$.

One can then try to apply the Cordero-Nazaret-Villani \cite{CNV} approach as generalized by Agueh-Ghoussoub-Kang \cite{AGK} and write the {\it Energy-Entropy production duality} for functions that are of compact support in $\Omega$, 
 \begin{eqnarray}
&&\sup\left\{- \int_{\Omega}(F(\rho) + \frac{1}{2}|x|^2\rho) \dd x; \int_\Omega \rho \dd x=1\right\}\\
&& \qquad \qquad=\inf\left\{\int_{\Omega} \alpha|\nabla u|^2
-G\left(\psi\circ u\right) \dd x; \, \int_\Omega\psi(u) \dd x=1\right\},\nonumber
\end{eqnarray}
where $G(x)=(1-n)F(x)+nxF'(x)$ and where $\psi$ and $\alpha$ are also computable from $F$.  

Recall that by choosing $F(x)=-nx^{1-1/n}$ and $\psi(t)=|t|^{2^*}$ where $2^*=\frac{2n}{n-2}$ and $n>2$, one obtains 
the following duality formula for the Sobolev inequality 
\begin{eqnarray}\label{eqn dual sob.}
\lefteqn{\sup\Big\{n\int_{\R^n}\rho^{1-1/n}\dd x - \frac{1}{2}\int_{\R^n} |x|^2\rho \dd x;  \;\int_{\R^n} \rho \dd x=1\Big\} }\nonumber\\
& & \qquad \qquad = \inf\Big\{2\left(\frac{n-1}{n-2}\right)^2 \int_{\R^n} |\nabla u|^2\dd x; \;\int_{\R^n} |u|^{2^*}\dd x =1\Big\},
\end{eqnarray}
where $u$ and $\rho$ have compact support in $\R^n$. The extremal $u_\infty$ and $\rho_\infty$ are then obtained as solutions of 
\begin{equation}\label{eqn140}
\nabla\left(\frac{|x|^{2}}{2} - \frac{n-1}{\rho_\infty^{1/n}}\right)=0, \quad \rho_\infty=u_\infty^{2^*}\in {\cal P}(\R^n).
\end{equation}
The best constants are then obtained 
by computing $\rho_\infty$ from (\ref{eqn140}) and inserting it into (\ref{eqn dual sob.}) in such a way that 
\[\inf\Big\{2\left(\frac{n-1}{n-2}\right)^2 \int_{\R^n} |\nabla u|^2\dd x: \;\; \int_{\R^n} |u|^{2^*}\dd x =1\Big\} = 
n\int_{\R^n}\rho_\infty^{1-1/n}\dd x - \frac{1}{2}\int_{\R^n} |x|^2\rho_\infty \dd x.\]
Note that this duality leads to a correspondence between a solution to the Yamabe equation 
\begin{equation}\label{eqn yamabe}
-\Delta u = |u|^{2^*-2}u \quad \mbox{on}\;\; \R^n
\end{equation}
and stationary solution to the rescaled fast diffusion  equation 
\begin{equation}\label{sob-fokker-planck}
\partial_t \rho = \Delta\rho^{1-\frac{1}{n}} + \div(x\rho) \quad \mbox{on}\;\; \R^n.
\end{equation}
The above scheme does not however apply to inequality (\ref{eqn 2euclidean-onofri}). For one, the functions $e^u\mu_2=\frac{e^{u(x)}}{\pi(1+|x|^2)^2}$ do not have compact support, and if one restricts them to bounded domains, we then need to take into consideration various boundary terms. What is remarkable is that a similar program can be carried out provided the dual formula involving the free energy  
$$J_\Omega (\rho)=- \int_{\Omega}(F(\rho) + |x|^2\rho) \dd x
$$ is renormalized by substituting it with 
$J_\Omega (\rho)-J_\Omega (\mu_2)$. 

Another remarkable fact is that the corresponding free energy turned out to be $F(\rho)=-2\rho^{\frac{1}{2}}$, which is the same as the one associated to the critical case of the Sobolev inequality $F(\rho)=-n\rho^{1-\frac{1}{n}}$ when $n\geq 3$. In other words, the Moser-Onofri inequality and the Sobolev inequality ``dualize" in the same way, and both the Yamabe problem (\ref{eqn yamabe}) and the prescribed Gaussian curvature problem (\ref{eqn 2-gaussian curvature}) reduce to the study of the fast diffusion equation (\ref{sob-fokker-planck}), 
with the caveat that in dimension $n=2$, the above equation needs to be considered only on bounded domains, with Neumann boundary conditions.

More precisely, we shall show that, when restricted to balls $B_R$ of radius $R$ in $\R^2$, there is a duality between the ``Onofri functional'' 
\[
\hbox{$I_R(u)=\frac{1}{16\pi} \int_{B_R} |\nabla u|^2\dd x + \int_{B_R} u\dd\mu_2$ \quad on \quad  $
X_R:=\{u\in H^1_0(B_R); \int_{B_R} e^u\dd\mu_2 =\ib \dd\mu_2\} $}
\]
and the free energy 
\[
\hbox{$J_R(\rho) = \frac{2}{\sqrt{\pi}}\int_{B_R} \sqrt{\rho}\dd x - \int_{B_R} |x|^2\rho\dd x$ \quad on \quad $Y_R:=\{\rho \in {\cal P}(\R^2);\, \ib \rho\dd x = \ib \dd \mu_2\}$}.
\]
Once the latter free energy is re-normalized, we then get
\begin{equation}
\sup\{J_R(\rho) - J_R(\mu_2); \rho \in Y_R\}=0=\inf\{ I_R(u); \, u\in X_R\}.
\end{equation}
Note that when $R\to +\infty$, the right hand side yields the Onofri inequality
\[
\inf\left\{\frac{1}{16\pi} \int_{\R^2} |\nabla u|^2\dd x + \int_{\R^2} u\dd\mu_2; \, u\in H^1_0(\R^2),  \int_{\R^2} e^u\dd\mu_2=1\right\} =0,
\]
while the left-hand side becomes undefined since $J_R(\mu_2) \to +\infty$ as $R\to \infty$.

We also show that our approach generalizes to  higher dimensions. More precisely,  if $B_R$ is a ball of radius $R$ in $\R^n$ where $n\geq 2$, and if one considers the probability density $\mu_n$ on $\R^n$ defined by
\[\mu_n(x)=\frac{n}{\omega_n(1+|x|^{\frac{n}{n-1}})^n}\]
($\omega_n$ is the area of the unit sphere in $\R^n$), and the operator $H_n(u,\mu_n)$ on $W^{1,n} (\R^n)$ by
\[H_n(u,\mu_n):=|\nabla u +\nabla(\log\mu_n)|^n - |\nabla(\log\mu_n))|^n - n |\nabla(\log\mu_n))|^{n-2}\nabla(\log\mu_n) \cdot \nabla u,\]
there is then a duality between the functional
\[I_R(u)=\frac{1}{\beta (n)}\int_{B_R} H_n(u,\mu_n)\dd x + \int_{B_R} u(x)\dd\mu_n \,\, \hbox{on $\, X_R:=\{u\in W^{1,n}_0(B_R); \int_{B_R} e^u\dd\mu_n =\ib \dd\mu_n$\}}\]
and the free energy -- renormalized by again subtracting $J_R(\mu_n)$ --
\[J_R(\rho) = \alpha (n) \int_{B_R} \rho(x)^{\frac{n-1}{n}}\dd x - \int_{B_R} |x|^{\frac{n}{n-1}}\rho(x)\dd x  \hbox{\,  on \, $Y_N:=\{\rho \in {\cal P}(\R^n);\, \ib \rho\dd x = \ib \dd \mu_n\}$}.\]
where $\alpha (n)=\frac{n}{n-1}(\frac{n}{\omega_n})^{1/n}$ and $\beta (n)=\omega_n\,(\frac{n}{n-1})^{n-1}n^n$. We then deduce the following higher dimensional version of the Euclidean Onofri inequality in $\R^n$, $n\geq 2$:
\begin{equation}
 \frac{1}{\beta (n)} \int_{\R ^n} H_p (u,\mu_n) \dd x +\int_{\R ^n} u \dd\mu_n -\log \left(\int_{\R ^n}  e^u \dd\mu_n \right)\geq 0\quad  \hbox{for all $u\in W^{1,n}(\R^n)$,}
 \end{equation}
and $u\equiv 0$ is the extremal.

\smallskip

We finish this introduction by mentioning that the Euclidean Onofri inequality (\ref{eqn 2euclidean-onofri}) in the radial case was recently obtained in \cite{DEJ} using mass transport techniques. However,  to our knowledge, our duality result, the extensions of Onofri's inequality to higher dimensions, as well as the mass transport proof of the general (non-radial) Onofri inequality are new.

\smallskip 

The paper is organized as follows. In section \ref{sec1}, we recall the mass transport approach to sharp Sobolev inequalities and some consequences. In section \ref{sec2}, we establish the mass transport duality principle leading to the Euclidean Onofri inequality (\ref{eqn 2euclidean-onofri}). Finally, in section \ref{sec3}, we extend the 2-dimensional analysis of section \ref{sec2} to higher dimensions $n\geq 2$, and we produce a n-dimensional version of the Euclidean Onofri inequality, as well as other interesting inequalities.


\section{Preliminaries} \label{sec1}
We start by briefly describing the mass transport approach to sharp Sobolev inequalities as proposed by \cite{CNV}. We will follow here the framework of \cite{AGK} as it clearly shows the correspondence between the Yamabe equation (\ref{eqn yamabe}) and the rescaled fast diffision equation (\ref{sob-fokker-planck}).

 Let $\rho_0, \rho_1\in {\cal P}(\R^n)$. If $T$ is the optimal map pushing $\rho_0$ forward to $\rho_1$ (i.e. $T_{\#}\rho_0=\rho_1$) in the mass transport problem for the quadratic cost $c(x-y)=\frac{|x-y|^2}{2}$ (see \cite{vi} for details), then $[0,1]\ni t\mapsto \rho_t=(T_t)_{\#}\rho_0$ is the ``geodesic'' joining $\rho_0$ and $\rho_1$ in $\left({\cal P}(\R^n), d_2\right)$; here $T_t:=(1-t)\mbox{id}+tT$ and $d_2$ denotes the quadratic Wasserstein distance (see \cite{vi}). Moreover, given a function $F:[0,\infty)\rightarrow \R$ such that $F(0)=0$ and $x\mapsto x^nF(x^{-n})$ is convex and non-increasing, the functional $H^F(\rho):=\int_{\R^n} F\left(\rho(x)\right)\dd x$ is displacement convex \cite{mc}, in the sense that $[0,1]\ni t\mapsto H^F(\rho_t)\in \R$ is convex (in the usual sense), for all pairs $(\rho_0,\rho_1)$ in ${\cal P}(\R^n)$. A direct consequence is the following convexity inequality, known as ``{\em energy inequality}'':
\[H^F(\rho_1)-H^F(\rho_0) \geq \left[\frac{d}{dt}H^F(\rho_t)\right]_{t=0} = \int_{\R^n} \rho_0\nabla\left(F'(\rho_0)\right)\cdot\left(T-\mbox{id}\right)\dd x,\]
which, after integration by parts of the right hand side term, reads as
\begin{equation}\label{eqn7}
-H^F(\rho_1) \leq - H^{F+nP_F}(\rho_0)  -\int_{\R^n}\rho_0\nabla\left(F'(\rho_0)\right)\cdot T(x)\,\mbox{d}x,
\end{equation}
where $P_F(x)=xF'(x)-F(x)$; here $\mbox{id}$ denotes the identity function on $\R^n$. By the Young inequality
\begin{equation}\label{eqn9}
-\nabla\left(F'(\rho_0)\right)\cdot T(x)\leq  \frac{|\nabla F'(\rho_0)|^p}{p} + \frac{|T(x)|^{q}}{q}  \quad \forall p, q >1 \;\; \mbox{such that} \quad \frac{1}{p}+\frac{1}{q}=1,
\end{equation}
(\ref{eqn7}) gives
\[-H^F(\rho_1)\leq -H^{F+nP_F}(\rho_0)+\frac{1}{p}\int_{\R^n}\rho_0 |\nabla F'(\rho_0)|^p \,\mbox{d}x + \frac{1}{q}\int_{\R^n}\rho_0(x) |T(x)|^{q}\,\mbox{d}x,\]
i.e.,
 \begin{equation}\label{eqn10}
 -H^F(\rho_1) - \frac{1}{q}\int_{\R^n} |y|^q\rho_1(y)\dd y  \leq -H^{F+nP_F}(\rho_0) + \frac{1}{p}\int_{\R^n}\rho_0 |\nabla F'(\rho_0)|^{p} \,\mbox{d}x,
 \end{equation}
 where we use that $T_{\#}\rho_0=\rho_1$.  Furthermore, if $\rho_0=\rho_1$, then $T=\mbox{id}$ and equality holds in (\ref{eqn7}). Then equality holds in (\ref{eqn10}) if it holds in the Young inequality (\ref{eqn9}). This occurs when $\rho_0=\rho_1$ satisfies 
 $\nabla\left(F'\left(\rho_0(x)\right)+\frac{|x|^{q}}{q}\right)=0$.
 Therefore, we have established the following duality:
 \begin{eqnarray}\label{eqn11}
\lefteqn{\sup\Big\{-H^F (\rho_1) - \frac{1}{q}\int_{\R^n} |y|^q\rho_1(y)\dd y: \;\;\rho_1\in{\cal P}(\R^n)\Big\} }\nonumber\\
& & \qquad \qquad  = \inf\Big\{ -H^{F+nP_F}(\rho_0) + \frac{1}{p} \int_{\R^n}\rho_0 |\nabla F'(\rho_0)|^p \,\mbox{d}x: \; \rho_0\in{\cal P}(\R^n)\Big\},
 \end{eqnarray}
and an optimal function in both problems is $\rho_0=\rho_1:=\rho_\infty$ solution of 
\begin{equation}\label{eqn12}
\nabla\left(F'\left(\rho_\infty(x)\right)+\frac{|x|^{q}}{q}\right)=0.
\end{equation}
In particular, choosing $F(x)=-nx^{1-1/n}$ and $\rho_0=u^{2^*}$ where $2^*=\frac{2n}{n-2}$ and $n>2$, then $H^{F+nP_F}=0$, and (\ref{eqn11})-(\ref{eqn12}) gives the duality formula for the Sobolev inequality (\ref{eqn dual sob.}).

\smallskip

Our goal now is to extend this mass transport proof of the Sobolev inequality to the Euclidean Onofri inequality (\ref{eqn 2euclidean-onofri}). As already mentioned in the introduction, a first attempt on this issue was recently made by \cite{DEJ}, but the result produced was only restricted to the radial case. Here we show in full generality (without restricting to radial functions $u$) that the Euclidean Onofri inequality (\ref{eqn 2euclidean-onofri}) can be proved by mass transport techniques. More precisely, we establish an analogue of the duality (\ref{eqn dual sob.}) for Euclidean Onofri inequality (see Theorem  \ref{theo 2-duality}), from which we deduce the Onofri inequality (\ref{eqn 2euclidean-onofri}) and its optimal function (see Theorem  \ref{theo 2-euclidean-onofri}). Furthermore, we obtain as for the Sobolev inequality, a correspondence between the prescribed Gaussian curvature problem (\ref{eqn 2-gaussian curvature}) and the rescaled fast diffusion equation (\ref{sob-fokker-planck}). Finally, we extend our analysis to higher dimensions, and then produce a new version of the Onofri inequality in dimensions $n\geq 2$ (see Theorem \ref{theo n-euclidean-onofri}).

\bigskip

The following general lemma from mass transport theory will be needed in the sequel.

\begin{lem}\label{lemma1}
 Let $\rho_0, \rho_1 \in \P(B_R)$, where $\P(B_R)$ denotes the set of probability densities on the ball $B_R\subset \R^n$. Let $T$ be the optimal map pushing $\rho_0$ forward to $\rho_1$ (i.e. $T_{\#}\rho_0=\rho_1)$  in the mass transport problem for the quadratic cost. Then 
\begin{equation}\label{eqn 2.1}
 \int_{B_R} \rho_1(y) ^{1-\frac{1}{n}}  \dd y \leq \frac{1}{n} \int_{B_R} \rho_0(x) ^ {1-\frac{1}{n}} {\rm div}\, (T(x)) \dd x .
\end{equation}
 \end{lem}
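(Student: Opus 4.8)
The plan is to apply the displacement-convexity machinery from the Preliminaries to the specific choice $F(x)=-nx^{1-1/n}$, which is precisely the free-energy density that appears throughout the paper. First I would verify that this $F$ satisfies the structural hypotheses required for displacement convexity: indeed $F(0)=0$, and a direct computation shows that $x\mapsto x^nF(x^{-n})=-nx$ is convex and non-increasing, so the functional $H^F(\rho)=-n\int_{B_R}\rho^{1-1/n}\dd x$ is displacement convex by McCann's theorem \cite{mc}. This lets me invoke the energy inequality from the excerpt on the pair $(\rho_0,\rho_1)$ joined by the Wasserstein geodesic.

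Next I would unwind that energy inequality for this particular $F$. The key point is that for $F(x)=-nx^{1-1/n}$ the pressure satisfies $P_F(x)=xF'(x)-F(x)=x^{1-1/n}$, so $F+nP_F=0$, which makes the $H^{F+nP_F}(\rho_0)$ term vanish. Rather than running the full Young-inequality step, however, I would go back one line to the raw convexity estimate
\[
H^F(\rho_1)-H^F(\rho_0)\geq \int_{B_R}\rho_0\,\nabla\bigl(F'(\rho_0)\bigr)\cdot(T-\mathrm{id})\dd x,
\]
and integrate the right-hand side by parts. Since $F'(\rho_0)=-(n-1)\rho_0^{-1/n}$, one has $\rho_0\nabla(F'(\rho_0))=\nabla(\rho_0^{1-1/n})\cdot(\text{const})$, so the integrand is a pure divergence paired against $T-\mathrm{id}$; integrating by parts converts it into $\int \rho_0^{1-1/n}\,\mathrm{div}(T-\mathrm{id})\dd x$ up to the correct multiplicative constant. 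Carrying the constants carefully and substituting $H^F(\rho_i)=-n\int\rho_i^{1-1/n}$ should collapse everything to the claimed inequality \eqref{eqn 2.1}; the factor $\frac1n$ on the right is exactly what emerges from the ratio between the coefficient $n-1$ in $F'$ and the coefficient $n$ in $H^F$.

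The main obstacle I anticipate is the integration by parts on the bounded domain $B_R$: unlike the whole-space computation in the Preliminaries, here there is a genuine boundary, and the boundary terms must be controlled. Because $\rho_0,\rho_1$ are supported in $B_R$ and $T$ pushes $\rho_0$ into $B_R$, the optimal map satisfies $T(B_R)\subseteq B_R$ (or one can take the Brenier map restricted to the support), so the relevant boundary integrand $\rho_0^{1-1/n}(T-\mathrm{id})\cdot\nu$ should either vanish or have the correct sign; pinning down that this boundary contribution is nonpositive (and hence can be dropped to preserve the inequality direction) is the delicate step. A secondary technical point is the regularity of $T$ and of $\rho_0^{1-1/n}$ needed to justify the integration by parts — I would either appeal to the standard approximation arguments in optimal transport \cite{vi} or note that the inequality is stable under the usual smoothing, so it suffices to establish it for smooth, strictly positive densities and pass to the limit.
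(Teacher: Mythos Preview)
Your approach via displacement convexity is not wrong in spirit---McCann's convexity for $F(x)=-nx^{1-1/n}$ is exactly what the lemma encodes---but the route you propose has a genuine sign problem at the boundary step. After your integration by parts, what one actually gets (tracking the constants) is
\[
\ib \rho_0^{1-\frac{1}{n}}\,{\rm div}\,T\,\dd x \;\ge\; n\ib \rho_1^{1-\frac{1}{n}}\,\dd y \;+\; \int_{\partial B_R}\rho_0^{1-\frac{1}{n}}(T-{\rm id})\cdot\nu\,\dd S,
\]
and since $T(B_R)\subseteq B_R$ one has $(T(x)-x)\cdot\nu\le 0$ on $\partial B_R$, so the boundary term is nonpositive, just as you say. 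But that is the \emph{wrong} sign: to discard it and recover (\ref{eqn 2.1}) you would need it to be \emph{nonnegative}. There is a second, related issue: the ``raw convexity estimate'' you quote from the Preliminaries is formulated on $\R^n$, and the identification of $\left[\frac{d}{dt}H^F(\rho_t)\right]_{t=0}$ with $\int\rho_0\nabla(F'(\rho_0))\cdot(T-{\rm id})\,\dd x$ already hides an integration by parts; on the bounded domain $B_R$ this identity itself carries a boundary correction, so it cannot simply be imported.

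The paper sidesteps all of this by arguing pointwise, with no integration by parts at all. It writes the Monge--Amp\`ere relation $\rho_0(x)=\rho_1(T(x))\det\nabla T(x)$, applies the arithmetic--geometric mean inequality $(\det\nabla T)^{1/n}\le\frac{1}{n}\,{\rm div}\,T$ (valid because $T=\nabla\varphi$ with $\varphi$ convex, so the eigenvalues of $\nabla T$ are nonnegative), and then changes variables $y=T(x)$. This yields (\ref{eqn 2.1}) directly, with no boundary term ever arising; the integration by parts is deferred to the proofs of Theorems \ref{theo 2-duality} and \ref{theo n-duality}, where the resulting boundary contribution is kept and estimated explicitly. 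Incidentally, this pointwise AM--GM computation \emph{is} the proof of displacement convexity of $H^F$ for this particular $F$, so your detour through the abstract energy inequality is, modulo the boundary trouble, essentially invoking the lemma in order to prove it.
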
 

\begin{proof} By \cite{brenier}, the optimal map $T: B_R\rightarrow B_R$ is such that $T=\nabla\varphi$ where $\varphi: B_R\rightarrow \R$ is convex.  Since  $T_{\#}\rho_0=\rho_1$, then it satisfies the Monge-Amp\`ere equation \begin{equation}\label{monge0}
  \rho_0(x)=\rho_1(T(x)) \det {\nabla T(x)}
  \end{equation}  
 or equivalently
 \begin{equation}\label{monge1}
 \rho_1 (T(x))=\rho_0 (x) [\det {\nabla T(x)}]^{-1}.
 \end{equation}
 By the arithmetic-geometric-mean inequality
\[  [\det \nabla T(x)]^ {\frac{1}{n}}\leq \frac{1}{n} \div (T(x)),\]
 (\ref{monge1}) gives
\begin{equation}\label{monge3}
\rho_1 (T(x))^{-\frac{1}{n}}   \leq \frac{1}{n} \, \rho_0(x)^{-\frac{1}{n}} \div\left(T(x)\right).
 \end{equation}
 Now using the change of variable $y=T(x)$, we have
\[\int_{B_R} \rho_1(y) ^{1-\frac{1}{n}} \dd y  = \int_{B_R} \rho_1\left(T(x)\right) ^{1-\frac{1}{n}} \det (\nabla T(x)) \dd x, \]
which implies by (\ref{monge0}) and (\ref{monge3}), 
\[\int_{B_R} \rho_1(y) ^{1-\frac{1}{n}}  \dd y  \leq \frac{1}{n} \int_{B_R} \rho_0(x)^{-\frac{1}{n}} \div (T(x)) \rho_0 (x) \dd x = \frac{1}{n} \int_{B_R} \rho_0 (x) ^{1-\frac{1}{n}} \div (T(x)) \dd x.\]
\end{proof} 


\section{Euclidean 2-dimensional Onofri inequality: a duality formula}\label{sec2}

Consider the probability density \[\mu_2(y)=\frac{1}{\pi\left(1+|y|^2\right)^2} \quad \forall y\in\R^2,\]
 and set
\begin{equation}\label{integral theta}
\theta_R:=\int_{B_R} \mu_2(y)\dd y = \frac{R^2}{1+R^2},
\end{equation}
where $B_R\subset\R^2$. We now establish the following duality.

\begin{thm}{\bf (Duality for the 2-dimensional Euclidean Onofri inequality)} \label{theo 2-duality}

Consider the functionals 
\[I_R(u) = \frac{1}{16\pi} \int_{B_R} |\nabla u(x)|^2\dd x + \int_{B_R} u(x)\dd \mu_2(x)\hbox{\quad  {\rm on}\,  $H^1_0(B_R)$, }\]
and
\[J_R(\rho)=\frac{2}{\sqrt{\pi}} \int_{B_R} \sqrt{\rho(y)}\dd y - \int_{B_R} |y|^2\rho(y)\dd y\hbox{ \quad {\rm on} ${\cal P}(\R^2)$.}\]
Then, the following holds:
\begin{eqnarray}\label{eqn duality-2-onofri}
&&\sup\left\{ J_R(\rho) - J_R(\mu_2);\,  \rho\in {\cal P}(\R^2),\,  \int_{B_R}\rho\dd y
=\theta_R \right\} \nonumber\\
&&\qquad \qquad \qquad= \inf\left\{ I_R(u);\, u\in H^1_0(B_R),\,  \int_{B_R}e^u\dd\mu_2 =\theta_R\right\}.
\end{eqnarray}
Moreover, the maximum on the l.h.s. is attained at $\rho_{max}=\mu_2$, and the minimum on the r.h.s. is attained at $u_{min} = 0$. Furthermore, the maximum and minimum value of  both functionals in (\ref{eqn duality-2-onofri}) is $0$. 
\end{thm}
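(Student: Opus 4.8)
The plan is to prove the duality through a single \emph{weak duality} inequality obtained by mass transport, and then to exhibit explicit competitors that pin both optima to $0$.

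I would first fix an admissible $\rho$ on the left ($\int_{B_R}\rho\dd y=\theta_R$) and an admissible $u$ on the right ($u\in H^1_0(B_R)$, $\int_{B_R}e^u\dd\mu_2=\theta_R$), and normalise by $\theta_R$, setting $\rho_0:=\frac{1}{\theta_R}e^u\mu_2$ and $\rho_1:=\frac{1}{\theta_R}\rho$. The constraint on $u$ is exactly what makes $\rho_0$ a probability density on $B_R$, so both lie in ${\cal P}(B_R)$ and I may take the optimal Brenier map $T=\nabla\varphi$ pushing $\rho_0$ onto $\rho_1$. Carrying out the displacement-convexity (``energy'') argument of Section \ref{sec1} \emph{on $B_R$} (so as to retain the boundary terms) with the choice $F(x)=-2\sqrt{x}$ --- for which $F+2P_F\equiv0$, exactly as in the critical Sobolev case --- together with the Young inequality (\ref{eqn9}) at $p=q=2$ applied before integrating against $\rho_0$, the transport term collapses into a Dirichlet-type energy of $\log\rho_0$:
\[
2\int_{B_R}\sqrt{\rho_1}\dd y-\tfrac12\int_{B_R}|y|^2\rho_1\dd y\ \le\ \tfrac18\int_{B_R}|\nabla\log\rho_0|^2\dd x+\mathcal{B}_R,
\]
where $\int_{B_R}|T|^2\rho_0=\int_{B_R}|y|^2\rho_1$ produces the quadratic term and $\mathcal{B}_R=\oint_{\partial B_R}\sqrt{\rho_0}\,(x\cdot\nu)\dd S$ is the boundary contribution generated when the identity part of $T$ is integrated by parts in (\ref{eqn7}).

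The heart of the argument is to read $I_R(u)$ off the right-hand side. Since $\log\rho_0=u+\log\mu_2+\mathrm{const}$, I would expand $|\nabla\log\rho_0|^2=|\nabla u|^2+2\,\nabla u\cdot\nabla\log\mu_2+|\nabla\log\mu_2|^2$. The first term gives the Dirichlet energy of $u$; the cross term is integrated by parts using $u\in H^1_0(B_R)$ (so its boundary term vanishes) and the identity $\Delta\log\mu_2=-8\pi\mu_2$, which converts it exactly into a multiple of the linear term $\int_{B_R}u\dd\mu_2$. The two leftover pieces --- the pure $\int|\nabla\log\mu_2|^2$ integral and the boundary term $\mathcal{B}_R$ --- depend only on $R$, because $u$ vanishes on $\partial B_R$ and hence $\rho_0=\mu_2/\theta_R$ there; they are precisely the $u$- and $\rho$-independent constant that the renormalisation $-J_R(\mu_2)$ is designed to absorb. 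Tracking the explicit constants carried by $\mu_2$ (and a dilation to fix the normalisation) turns the displayed inequality into the clean statement $J_R(\rho)-J_R(\mu_2)\le I_R(u)$ for every admissible pair, i.e. $\sup(\cdots)\le\inf(\cdots)$.

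It then remains to close the loop: $\rho=\mu_2$ is admissible and gives $J_R(\mu_2)-J_R(\mu_2)=0$, so the supremum is $\ge0$, while $u\equiv0$ is admissible and gives $I_R(0)=0$, so the infimum is $\le0$. With weak duality this sandwiches $0\le\sup(\cdots)\le\inf(\cdots)\le0$, forcing both to equal $0$ and making $\rho_{max}=\mu_2$, $u_{min}=0$ optimal; at these choices $\rho_0=\rho_1$, $T=\mathrm{id}$, and the arithmetic--geometric-mean and Young steps are equalities, which also yields uniqueness. I expect the genuine obstacle to be the honest evaluation of the \emph{boundary terms}: unlike the compactly supported setting of Section \ref{sec1}, here $\rho_0$ does not vanish on $\partial B_R$, so integration by parts generates $\mathcal{B}_R$, and one must verify both that the cross boundary term dies via the $H^1_0$ condition and that $\mathcal{B}_R$ together with the divergent $\int|\nabla\log\mu_2|^2$ reassembles exactly into $J_R(\mu_2)$ --- this is the structural reason the free energy must be renormalised before the duality can hold.
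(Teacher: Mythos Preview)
Your overall strategy coincides with the paper's: transport $\rho_0=e^u\mu_2/\theta_R$ onto $\rho_1=\rho/\theta_R$, use the displacement convexity of $-2\int\sqrt{\rho}$, apply Young with $p=q=2$, expand $|\nabla\log\rho_0|^2$ via $\Delta\log\mu_2=-8\pi\mu_2$, and close the sandwich with the explicit competitors $\rho=\mu_2$, $u=0$. (The paper reaches the starting inequality via Lemma~\ref{lemma1}, i.e.\ the AM--GM route $2\int\sqrt{\rho_1}\le\int\sqrt{\rho_0}\,\mathrm{div}\,T$, rather than quoting (\ref{eqn7}); for $F(x)=-2\sqrt x$ the two coincide.)

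There is, however, a concrete gap exactly at the point you flag as the ``genuine obstacle''. The boundary contribution is \emph{not} $\oint_{\partial B_R}\sqrt{\rho_0}\,(x\cdot\nu)\,\dd S$; it is $\oint_{\partial B_R}\sqrt{\rho_0}\,T(x)\cdot\nu\,\dd S$. On $B_R$ the displacement-convexity inequality before any integration by parts is $2\int_{B_R}\sqrt{\rho_1}\le\int_{B_R}\sqrt{\rho_0}\,\mathrm{div}\,T\,\dd x$, and the single integration by parts that follows produces a surface integral of $\sqrt{\rho_0}\,T\cdot\nu$. This term depends on $T$, hence on the target $\rho$, so it is \emph{not} the $u$- and $\rho$-independent constant you describe; your mechanism ``integrate by parts the identity part of $T$ in (\ref{eqn7})'' is not what actually happens, because (\ref{eqn7}) itself already hides one integration by parts whose boundary term on $B_R$ brings in $T$. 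The paper handles this by the crude but sharp-at-the-optimum estimate
\[
\int_{\partial B_R}\sqrt{\rho_0}\,T\cdot\nu\,\dd S\ \le\ R\int_{\partial B_R}\sqrt{\rho_0}\,\dd S\ =\ \int_{\partial B_R}\sqrt{\rho_0}\,(x\cdot\nu)\,\dd S,
\]
using that $T$ maps $B_R$ into $B_R$ so $|T|\le R$ on $\overline{B_R}$. After this bound your $\mathcal{B}_R$ is exactly the right object, and the rest of your bookkeeping (combining it with $\int|\nabla\log\mu_2|^2$ into $J_R(\mu_2)$) goes through. But the step ``$|T|\le R$'' is indispensable, and your equality-case checklist must also record that the boundary estimate saturates when $T=\mathrm{id}$, which is immediate.
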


\begin{proof} Applying lemma \ref{lemma1} in dimension $n=2$, and integrating by parts, we get
\begin{align}\label{5}
\begin{split}
2 \int_{B_R} \sqrt{\rho_1 (y)} \ dy & \leq \int_{B_R} \sqrt{\rho_0 (x)}\ \div (T(x)) \dd x \\
&= - \int_{B_R} \nabla (\sqrt{\rho_0}) \cdot T(x) \dd x + \int_{\partial B_R} \sqrt{\rho_0 (x)} \ T(x) \cdot \nu \dd S 
\end{split}
\end{align}
where $\nu=x/|x|$ denotes the outward unit normal to the boundary $\partial B_R$ of $B_R$, and $\dd S$ is the surface element.  By the basic identity
\[\nabla\left(\sqrt{\rho_0}\right) = \frac{1}{2} \sqrt{\rho_0} \ \nabla (\log \rho_0),\]
we can rewrite (\ref{5}) as
 \begin{equation}\label{7}
 4 \int_{B_R} \sqrt{\rho_1 (y)} \dd y \leq - \int_{B_R} \sqrt{\rho_0} \ \nabla (\log \rho_0)\cdot T(x) \dd x + 2 \int_{\partial B_R} \sqrt{\rho_0 (x)} \ T(x) \cdot \nu \dd S.
 \end{equation}
Now introduce the probability density 
\[\rho_0 (x)= \frac{e^u(x) \mu_2(x)}{\theta_R} \quad \forall x\in B_R,\]
where $u\in H^1_0(B_R)$ satisfies $\int_{B_R}e^u\dd\mu_2 =\theta_R$. Then (\ref{7}) becomes
 \begin{equation}\label{8}
  4 \int_{B_R} \sqrt{\theta_R\, \rho_1} \dd y \leq - \int_{B_R} \sqrt{e^u \mu_2} \ \nabla \left(\log (e^u \mu_2)\right)\cdot T(x) \dd x + 2 \int_{\partial B_R} \sqrt{e^u \mu_2} \ T(x) \cdot \nu \dd S.
 \end{equation}
We use Young inequality
\[-  \sqrt{e^u \mu_2} \ \nabla \left(\log (e^u \mu_2)\right)\cdot T(x) \leq \frac{1}{2\varepsilon} |\nabla \left(\log (e^u \mu_2)\right)|^2 + \frac{\varepsilon}{2} \ e^u \mu_2 |T(x)|^2 \quad \mbox{with} \quad \vep=4\sqrt{\pi},\]
and the fact that $T_{\#}\rho_0=\rho_1$, to rewrite (\ref{8}) as
\begin{eqnarray}\label{9}
32\int_{B_R} \sqrt{\pi\theta_R\, \rho_1} \ dy  - 16\pi\int_{B_R}|y|^2\theta_R\rho_1(y) \dd y - 16\int_{\partial B_R} \sqrt{\pi e^u \mu_2} \ T(x) \cdot \nu \dd S  \nonumber \\
 \leq  \int_{B_R} |\nabla u+\nabla(\log\mu_2)|^2\dd x. \qquad \qquad \qquad \qquad
\end{eqnarray}
It is easy to check that 
\[\ib |\nabla u+ \nabla (\log \mu_2))|^2 \dd x = \ib |\nabla u|^2 \dd x +\ib |\nabla (\log \mu_2)|^2 \dd x - 2 \ib  u\, \Delta (\log \mu_2) \dd x,\]
and
\[\Delta (\log \mu_2) = -8\pi\mu_2.\]
Inserting these identities into (\ref{9}) and using that $u\vert_{\partial B_R} =0$,  we have
\begin{eqnarray}\label{10}
32\int_{B_R} \sqrt{\pi\theta_R\, \rho_1} \ dy  - 16\pi\int_{B_R}|y|^2\theta_R\rho_1(y) \dd y -  16\int_{\partial B_R} \sqrt{\pi \mu_2} \ T(x) \cdot \nu \dd S\nonumber \\
 \leq  \ib |\nabla u|^2 \dd x  +16\pi \ib  u \dd\mu_2 +     \ib |\nabla (\log \mu_2)|^2 \dd x.  \qquad \qquad
\end{eqnarray}
We now estimate the boundary term. Since $T: B_R \rightarrow B_R$, then $|T(x)|\leq R$ for all $x\in B_R$, so that
 \[ \ \int_{\partial B_R} T(x)\cdot x \dd S \leq \int_{\partial B_R} |T(x)||x| \dd S \leq R^2 \int_{\partial B_R} \dd S =2 \pi R^3.\]
 Then using that $\nu=x/|x|$, we have
 \begin{equation}\label{11}
 -\int_{\partial B_R} \sqrt{\pi \mu_2} \ T(x) \cdot \nu \dd S =  -\frac{1}{R(1+R^2)}\int_{\partial B_R} T(x)\cdot x\dd S \geq -\frac{2\pi R^2}{1+R^2} = -2\pi\theta_R.
 \end{equation}
We insert (\ref{11}) into (\ref{10}) to have
\begin{eqnarray}\label{12}
32\int_{B_R} \sqrt{\pi\theta_R\, \rho_1} \ dy  - 16\pi\int_{B_R}|y|^2\theta_R\rho_1(y) \dd y -  32\pi\theta_R \nonumber \\
 \leq  \ib |\nabla u|^2 \dd x  +16\pi \ib  u \dd\mu_2 +     \ib |\nabla (\log \mu_2)|^2 \dd x.
\end{eqnarray}
By a direct computation, we have
\[\ib |\nabla(\log\mu_2)|^2 \dd x = 16 \ib \sqrt{\pi\mu_2} \dd y  - 16\pi\theta_R.\]
This implies that
\begin{eqnarray*}
32\int_{B_R} \sqrt{\pi\theta_R\, \rho_1} \ dy  - 16\pi\int_{B_R}|y|^2\theta_R\rho_1(y) \dd y - 16\ib \sqrt{\pi\mu_2}\dd y -16\pi\theta_R \nonumber \\
 \leq  \ib |\nabla u|^2 \dd x  +16\pi \ib  u \dd\mu_2, \qquad \qquad \qquad \qquad
\end{eqnarray*}
that is, after dividing by 16,
\begin{eqnarray}\label{13}
2\int_{B_R} \sqrt{\pi\theta_R\, \rho_1} \ dy  - \pi\int_{B_R}|y|^2\theta_R\rho_1 \dd y - \left(\ib \sqrt{\pi\mu_2}\dd y +\pi\theta_R\right) \nonumber \\
 \leq  \frac{1}{16}\ib |\nabla u|^2 \dd x  +\pi \ib  u \dd\mu_2. \qquad \qquad \qquad \qquad
\end{eqnarray}
We easily check that 
\[\ib \sqrt{\pi\mu_2}\dd y + \pi\theta_R = 2\ib \sqrt{\pi\mu_2}\dd y - \pi \ib |y|^2\mu_2 \dd y.\]
Inserting this identity into (\ref{13}) yields
\begin{eqnarray}\label{14}
2\int_{B_R} \sqrt{\pi\theta_R\, \rho_1} \ dy  - \int_{B_R}|y|^2\pi\theta_R\rho_1 \dd y - \left(2\ib \sqrt{\pi\mu_2}\dd y - \ib |y|^2\pi\mu_2 \dd y\right) \nonumber \\
 \leq  \frac{1}{16}\ib |\nabla u|^2 \dd x  +\pi \ib  u \dd\mu_2. \qquad \qquad \qquad \qquad
\end{eqnarray}
Now, we simplify (\ref{14}) by $\pi$ and set $\rho=\theta_R\rho_1$, to have that
\begin{equation}\label{15}
J_R(\rho) - J_R(\mu_2) \leq I_R(u) \
\end{equation}
for all functions $u$ and $\rho$ such that $u\in H^1_0(B_R)$ and $\ib e^u\dd\mu_2 = \theta_R=\ib \rho(y) \dd y$. Observe that if $\rho=\mu_2$, then the l.h.s of (\ref{15}) is $0$, and so is the r.h.s if $u=0$. Therefore
\[0\leq \sup_{\rho} \left\{J_R(\rho) - J_R(\mu_2): \; \ib \rho(y) \dd y =\theta_R\right\} \leq \inf_{u\in H^1_0(B_R)}\left\{ I_R(u): \ib e^u \dd \mu_2 =\theta_R\; \right \} \leq 0.\]
This completes the proof of the theorem.
\end{proof}

\begin{rem}
Note that the duality formula (\ref{eqn duality-2-onofri}) for Onofri's inequality is restricted to balls $B_R$, while for the Sobolev inequality, it is defined on the whole space $\R^n$, $n\geq 3$. The reason is simply because the free energy
\[J_R(\rho)= \frac{2}{\sqrt{\pi}} \int_{B_R} \sqrt{\rho(y)}\dd y - \int_{B_R} |y|^2\rho(y)\dd y\]
for (\ref{eqn duality-2-onofri}) is not always finite on the whole domain $\R^2$, that is when $R\to \infty$. This can be seen when we choose for example $\rho=\mu_2$. We then have 
\[J_R(\mu_2) = \log(1+R^2) + \frac{R^2}{1+R^2}\]
so that $\lim_{R \to \infty} J_R(\mu_2) = \infty$.
\end{rem}

\smallskip

From Theorem  \ref{theo 2-duality}, we can deduce a certain number of results. One of them is the Euclidean Onofri inequality.

\begin{thm} ({\bf 2-dimensional Euclidean Onofri inequality})\label{theo 2-euclidean-onofri}
 
The following holds:
\begin{equation}\label{L2 onofri}
 \log\left(\int_{\R^2} e^u\,\mbox{d}\mu_2\right) -  \int_{\R^2}u\, d\mu_2   \leq \frac{1}{16\pi}\int_{\R^2} |\nabla u|^2 dx  \quad \forall u\in H^1(\R^2).
\end{equation}
Moreover $u=0$ is an optimal function.
\end{thm}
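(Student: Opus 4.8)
The plan is to read inequality \eqref{L2 onofri} as the single assertion $G(u)\geq 0$, where $G(u):=\frac{1}{16\pi}\int_{\R^2}|\nabla u|^2\dd x+\int_{\R^2}u\dd\mu_2-\log\big(\int_{\R^2}e^u\dd\mu_2\big)$, and to extract this from Theorem \ref{theo 2-duality} by a zero-extension. Indeed, if $v\in H^1_0(B_R)$ is extended by $0$ to $\R^2$, then $\int_{\R^2}e^v\dd\mu_2=\int_{B_R}e^v\dd\mu_2+(1-\theta_R)$, so the admissibility constraint $\int_{B_R}e^v\dd\mu_2=\theta_R$ is \emph{exactly} the normalization $\int_{\R^2}e^v\dd\mu_2=1$; and since $v$ vanishes outside $B_R$, one has $I_R(v)=\frac{1}{16\pi}\int_{\R^2}|\nabla v|^2\dd x+\int_{\R^2}v\dd\mu_2=G(v)+\log 1=G(v)$. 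Thus Theorem \ref{theo 2-duality} gives $G(v)\geq 0$ for every compactly supported, $\mu_2$-normalized $v$. Because $G(0)=0$, once the inequality is proved in full the optimality of $u\equiv 0$ is automatic.

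Two reductions then remain: removing the normalization $\int e^v\dd\mu_2=1$, and passing from compactly supported functions to general $u\in H^1(\R^2)$. For the first, the crucial observation is that $G$ is \emph{formally} invariant under $u\mapsto u+c$, since $\mu_2$ is a probability density and the constant cancels between $\int u\dd\mu_2$ and $\log\int e^u\dd\mu_2$. This cannot be used literally, as nonzero constants are not in $H^1(\R^2)$; I would instead realize it in the limit. Given $u\in C^\infty_c(\R^2)$ with $m:=\int_{\R^2}e^u\dd\mu_2$, set $u_R:=u+w_R$, where $w_R$ is a radial logarithmic cutoff equal to a constant $a_R$ on $B_R$, interpolating linearly in $\log|x|$ down to $0$ on the annulus $B_{R^2}\setminus B_R$, and vanishing outside $B_{R^2}$; choose $a_R$ so that $\int_{\R^2}e^{u_R}\dd\mu_2=1$ exactly, whence $a_R\to-\log m$. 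The decisive estimate is the two-dimensional conformal-capacity phenomenon $\int_{\R^2}|\nabla w_R|^2\dd x=2\pi a_R^2/\log R=O\big((\log m)^2/\log R\big)\to 0$. Since $\mu_2$ has total mass $1$ and concentrates in bounded regions, one also checks $\int u_R\dd\mu_2\to\int u\dd\mu_2-\log m$, so that $G(u_R)\to G(u)$. Applying the normalized compactly supported case to each $u_R\in H^1_0(B_{R^2})$ and letting $R\to\infty$ yields $G(u)\geq 0$ for all $u\in C^\infty_c(\R^2)$.

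The second reduction is by density: $C^\infty_c(\R^2)$ is dense in $H^1(\R^2)$, so it suffices to show $G$ is continuous along a sequence $u_k\to u$ in $H^1$. The gradient term is trivially continuous, and the linear term is controlled by Cauchy--Schwarz together with $\mu_2\in L^2(\R^2)$ and the embedding $H^1(\R^2)\hookrightarrow L^4(\R^2)$, via $\int|u_k-u|^2\dd\mu_2\leq\|u_k-u\|_{L^4}^2\,\|\mu_2\|_{L^2}$.

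The main obstacle I anticipate is the exponential term: proving $\int e^{u_k}\dd\mu_2\to\int e^u\dd\mu_2$ requires a Moser--Trudinger-type bound, both to guarantee finiteness of $\int e^u\dd\mu_2$ for $u\in H^1(\R^2)$ and to furnish the uniform integrability needed to pass to the limit. I expect this continuity step, together with the logarithmic-cutoff energy estimate of the second paragraph, to be the delicate parts of the argument, whereas the duality input itself is used essentially verbatim.
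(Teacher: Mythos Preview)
Your approach is essentially the same as the paper's: invoke Theorem~\ref{theo 2-duality} on balls $B_R$ and send $R\to\infty$. The paper's own proof is extremely terse---it simply notes $I_R(u)\geq 0$ for $u\in H^1_0(B_R)$ with $\int_{B_R}e^u\dd\mu_2=\theta_R$, observes $\theta_R\to 1$, and declares the inequality for all of $H^1(\R^2)$---whereas you carefully supply the approximation machinery (zero-extension to identify the constraint, logarithmic cutoff to handle the normalization without using constants in $H^1$, density and continuity to pass to general $H^1$) that the paper leaves implicit; your identification of the Moser--Trudinger input needed for continuity of the exponential term is correct and not circular, since only the qualitative Moser bound is required, not the sharp Onofri constant.
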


\begin{proof} By Theorem  \ref{theo 2-duality}, we have for all $u\in H^1_0(B_R)$ such that $\ib e^u\dd\mu_2 =\theta_R=\frac{R^2}{1+R^2}$,
\begin{equation}\label{16}
I_R(u) =  \frac{1}{16\pi} \int_{B_R} |\nabla u(x)|^2\dd x + \int_{B_R} u(x)\dd \mu_2(x) \geq 0,
\end{equation}
with equality if $u=0$. Letting $R\to\infty$, we have $\theta_R\to 1$; then (\ref{16}) becomes
\[\frac{1}{16\pi} \int_{\R^2} |\nabla u(x)|^2\dd x + \int_{\R^2} u(x)\dd \mu_2(x) \geq 0 = \log\left(\int_{\R^2} e^u\dd\mu_2\right),\]
for all $u\in H^1(\R^2)$, with equality if $u=0$. This completes the proof of the theorem.
\end{proof}

Another consequence of Theorem  \ref{theo 2-duality} is the correspondence between solutions to the prescribed Gaussian curvature problem and stationary solutions to a  rescaled fast diffusion equation.
Indeed, it is known from the theory of mass transport (see \cite{vi} for example) that a maximizer $\rho$ to the supremum problem in (\ref{eqn duality-2-onofri}) is a solution to the pde:
\[\frac{1}{2\sqrt{\pi}} \Delta(\sqrt{\rho}) + \div(x\rho)=0 \quad \mbox{in} \quad  B_R,\]
(with Neumann boundary condition) that is the stationary solution of the rescaled fast diffusion equation
\begin{equation}\label{rescaled fd}
\partial_t\rho = \frac{1}{2\sqrt{\pi}} \Delta(\sqrt{\rho}) + \div(x\rho) \quad \mbox{in}\quad B_R.
\end{equation}
On the other hand, it can be shown that a minimizer $u$ to the infimum problem in (\ref{eqn duality-2-onofri}) solves the semilinear pde:
\begin{equation}\label{gaussian curvature projected}
\frac{1}{8\pi}\Delta u +\lambda \mu_2\, e^u = \mu_2 \quad \mbox{in}\quad B_R,
\end{equation}
where $\lambda$ is the Lagrange multiplier for the constraint $\int_{B_R} e^u\dd \mu_2=\theta_R$. The duality formula (\ref{eqn duality-2-onofri}) establishes then a correspondence between stationary solutions to the rescaled fast diffusion equation (\ref{rescaled fd}) and solutions to the semilinear equation (\ref{gaussian curvature projected}).


\section{Extension to higher dimensions} \label{sec3}

Here, we extend the 2-dimensional analysis of section \ref{sec2} to higher dimensions $n\geq 2$, thereby producing a $n$-dimensional version of the Euclidean Onofri inequality (\ref{L2 onofri}). We start by proving a $n$-dimensional version of the duality formula (\ref{eqn duality-2-onofri}).  Hereafter, we denote by 
\[\mu_n(y)=\frac{n}{\omega_n(1+|y|^\frac{n}{n-1})^n}\]
 a probability density on $\R^n$, where $\omega_n$ is the area of the unit sphere in $\R^n$.
 Set 
\[\theta_R :=\ib \mu_n(y)\dd y = \frac{R^n}{(1+R^\frac{n}{n-1})^{n-1}}.\]

\begin{thm}{\bf (Duality for n-dimensional Euclidean Onofri inequality)} \label{theo n-duality}

Consider the functionals
\[J_R(\rho) = \alpha(n) \int_{B_R} \rho(y)^{\frac{n-1}{n}}\dd y - \int_{B_R} |y|^\frac{n}{n-1}\rho(y)\dd y \]
and
\[I_R(u)= \frac{1}{\beta(n)}\int_{B_R} H_n(u,\mu_n)\dd x + \int_{B_R} u(x)\dd\mu_n \qquad \forall u\in W^{1,n}_0(B_R)\]
where 
\[ \alpha(n)= \frac{n}{n-1}\left(\frac{n}{\omega_n}\right)^{1/n}, \qquad \beta(n)=\left(\frac{n}{n-1}\right)^{n-1}\, n^n\, \omega_n\]
and
\[H_n(u,\mu_n):=|\nabla u +\nabla(\log\mu_n)|^n - |\nabla(\log\mu_n)|^n - n |\nabla(\log\mu_n)|^{n-2}\nabla(\log\mu_n) \cdot \nabla u.\]
Then the following duality holds:
\begin{equation}\label{eqn duality-n-onofri}
\sup\left\{ J_R(\rho) - J_R(\mu_n); \rho \in {\cal P}(\R^n), \, \int_{B_R}\rho\dd y =\theta_R \right\} = \inf\left\{ I_R(u);\, u\in W^{1,n}_0(B_R),   \int_{B_R}e^u\dd\mu_n =\theta_R\right\}.
\end{equation}
Moreover, the maximum on the l.h.s. is attained at $\rho_{max}=\mu_n$, and the minimum on the r.h.s. is attained at $u_{min} = 0$. Furthermore, the maximum and minimum value of  both functionals in (\ref{eqn duality-n-onofri}) is $0$. 
\end{thm}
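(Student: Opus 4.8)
The plan is to rerun the mass-transport scheme of Theorem \ref{theo 2-duality}, with the exponent $\tfrac12$ replaced by $\tfrac{n-1}{n}$ and the Laplacian replaced by the $n$-Laplacian $\Delta_n v:=\div(|\nabla v|^{n-2}\nabla v)$. Fix $\rho\in\P(\R^n)$ with $\ib\rho\,\dd y=\theta_R$, set $\rho_1:=\rho/\theta_R$ on $B_R$ (a probability density there), and introduce $\rho_0:=e^{u}\mu_n/\theta_R$ for $u\in W^{1,n}_0(B_R)$ with $\ib e^{u}\dd\mu_n=\theta_R$. Let $T=\nabla\varphi$ be the optimal map with $T_{\#}\rho_0=\rho_1$. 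Starting from Lemma \ref{lemma1} and integrating the divergence by parts, I would obtain
\[
\ib\rho_1^{\frac{n-1}{n}}\dd y\le-\frac1n\ib\nabla\!\bigl(\rho_0^{\frac{n-1}{n}}\bigr)\cdot T\,\dd x+\frac1n\int_{\partial B_R}\rho_0^{\frac{n-1}{n}}\,T\cdot\nu\,\dd S,
\]
and then substitute the elementary identity $\nabla(\rho_0^{\frac{n-1}{n}})=\frac{n-1}{n}\rho_0^{\frac{n-1}{n}}\nabla(\log\rho_0)$, which exposes $\nabla(\log\rho_0)=\nabla u+\nabla(\log\mu_n)$ since $\log\theta_R$ is constant.

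The decisive step is a Young inequality with the conjugate exponents $p=n$ and $q=\frac{n}{n-1}$, applied as
\[
-\nabla(\log\rho_0)\cdot\bigl(\rho_0^{\frac{n-1}{n}}T\bigr)\le\frac{\lambda^{n}}{n}\,|\nabla u+\nabla(\log\mu_n)|^{n}+\frac{n-1}{n}\,\lambda^{-\frac{n}{n-1}}\,\rho_0\,|T|^{\frac{n}{n-1}},
\]
where $\lambda>0$ is calibrated so that, after the final normalization, the gradient term carries the weight $1/\beta(n)$; the second term, via $T_{\#}\rho_0=\rho_1$, reproduces the moment $\ib|y|^{\frac{n}{n-1}}\rho_1\,\dd y$. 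To turn the leading power into the functional $I_R$, I would use the very definition of $H_n$, namely
\[
|\nabla u+\nabla(\log\mu_n)|^{n}=H_n(u,\mu_n)+|\nabla(\log\mu_n)|^{n}+n\,|\nabla(\log\mu_n)|^{n-2}\nabla(\log\mu_n)\cdot\nabla u,
\]
and integrate the last (cross) term by parts. Because $u$ vanishes on $\partial B_R$, its boundary contribution drops and it becomes $-n\ib u\,\Delta_n(\log\mu_n)\,\dd x$.

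Two explicit computations then make everything close up. The first is the nonlinear identity
\[
\Delta_n(\log\mu_n)=-\frac{\beta(n)}{n}\,\mu_n,
\]
the exact analogue of $\Delta(\log\mu_2)=-8\pi\mu_2$ (indeed $\beta(2)/2=8\pi$); it converts the cross term into $\beta(n)\ib u\,\dd\mu_n$, which after division by $\beta(n)$ is precisely the linear term of $I_R$. The second is the direct evaluation of $\ib|\nabla(\log\mu_n)|^{n}\dd x$, which I would combine with the boundary term — estimated via $|T(x)|\le R$ (since $T:B_R\to B_R$) together with $\rho_0=\mu_n/\theta_R$ on $\partial B_R$, exactly as in (\ref{11}) — to reassemble $-J_R(\mu_n)$. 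Collecting all contributions and dividing by the normalizing constant yields $J_R(\rho)-J_R(\mu_n)\le I_R(u)$ for every admissible pair. Testing $\rho=\mu_n$ makes the left side vanish, while $u=0$ gives $H_n(0,\mu_n)=0$ and hence $I_R(0)=0$; this sandwiches both the supremum and the infimum between $0$ and $0$, proving the duality and identifying $\rho_{max}=\mu_n$ and $u_{min}=0$.

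The main obstacle is the constant bookkeeping, concentrated in the radial computation of $\Delta_n(\log\mu_n)$ and the choice of $\lambda$. Since $\Delta_n$ is genuinely nonlinear for $n>2$, verifying that it returns the multiple $-\beta(n)/n$ of $\mu_n$ — the value forced by matching the $H_n$-coefficient $1/\beta(n)$ against the linear coefficient $1$ — is more delicate than the $n=2$ case, and it is where the precise forms of $\mu_n$, $\alpha(n)$ and $\beta(n)$ are essential.
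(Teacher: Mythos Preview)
Your proposal is correct and follows essentially the same route as the paper: Lemma \ref{lemma1} plus integration by parts, the Young inequality with conjugate exponents $(n,\tfrac{n}{n-1})$ and a free scaling parameter, the identity $\Delta_n(\log\mu_n)=-(nm)^{n-1}\omega_n\,\mu_n=-\tfrac{\beta(n)}{n}\mu_n$ (with $m=\tfrac{n}{n-1}$) to convert the cross term into $\beta(n)\ib u\,\dd\mu_n$, the boundary estimate via $|T|\le R$, and the final sandwich at $\rho=\mu_n$, $u=0$. The only cosmetic difference is that the paper keeps the Young parameter $\varepsilon$ free and then sets it to the explicit value $\varepsilon_{\max}(\mu_n)=(n^{1/m}m\,\omega_n^{1/n})^{1/(m-1)}$, whereas you phrase this as ``calibrating $\lambda$''; either way the bookkeeping you flag as the main obstacle is exactly what the paper does in the string of identities leading to (\ref{maxim}).
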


\begin{rem} Before proving the theorem, we make a few  remarks on the operator $H_n(u,\mu_n)$.
\begin{enumerate}
\item Consider the function $c:\R^n \rightarrow \R$, $c(z)=|z|^n$, $n\geq 2$. Clearly $c$ is strictly convex, and $\nabla c(z)=n|z|^{n-2}z$. So we have the convexity inequality
\begin{equation}\label{inequality c}
c(z_1) - c(z_0) -\nabla c(z_0) \cdot (z_1-z_0) \geq 0 \quad \forall z_0, z_1\in \R^n.
\end{equation}
Setting $z_0=\nabla(\log\mu_n)$ and $z_1=\nabla u +\nabla(\log\mu_n)$, we see that $H_n(u,\mu_n)$ is nothing but the l.h.s of (\ref{inequality c}); we then deduce that
\[ H_n(u,\mu_n) \geq 0 \qquad  \forall \, u, \mu_n.\]
\item For all $u\in  W^{1,n}_0(B_R)$, the integral of $H_n(u,\mu_n)$ over $B_R$ involves a well-known operator, the $n$-Laplacian $\Delta_n$, defined by
\begin{equation}\label{eqn n-laplacian}
\Delta_n v := \div(|\nabla v|^{n-2} \nabla v).
\end{equation}
Indeed, this can be seen after performing an integration by parts in the last term of $H_n(u,\mu_n)$,
\[\ib H_n(u,\mu_n) \dd y =  \ib |\nabla u +\nabla \log \mu_n|^n \dd y - \ib |\nabla(\log\mu_n))|^n\dd y   + n\ib u\, \Delta_n(\log\mu_n)\dd y.\]
\end{enumerate}
\end{rem}

\begin{proof} (Theorem \ref{theo n-duality}). It goes along the lines of the proof of Theorem  \ref{theo 2-duality}. Indeed, applying lemma \ref{lemma1}  in dimension $n\geq 2$, and using an integration by parts and $\frac{1}{m}:=1-\frac{1}{n}$, we have
\[n\ib \rho_1 ^{1/m} \dd y \leq - \ib \nabla(\rho_0 ^{1/m}) \cdot T(x) \dd x +  \int_{\partial B_R} \rho_0 ^{1/m} T(x)\cdot\nu \dd S.\]
We use the elementary identity  
\[\nabla (\rho_0 ^{1/m})= \frac{1}{m} \rho_0 ^ {1/m} \nabla (\log \rho_0)\]
to obtain
\begin{equation}\label{first}
mn \ib \rho_1 ^{1/m}\dd y \leq - \ib \rho_0 ^{1/m}  \nabla (\log \rho_0) \cdot T(x) \dd x + m \int_{\partial B} \rho_0 ^{1/m} T(x)\cdot \nu \dd S.
\end{equation}
Set $\rho_0 = \frac{e^{u} \mu_n}{\theta_R}$, where $u\in W^{1,n}_0(B_R)$ satisfies $\ib e^{u} d\mu_n =\theta_R$. 
Then (\ref{first}) becomes
\[mn \ib (\theta_R \rho_1) ^{1/m}\dd y \leq - \ib (e^u \mu_n) ^{1/m}  \nabla (\log (e^u \mu_n)) \cdot T(x) \dd x + m \int_{\partial B_R} (e^u \mu_n) ^{1/m} T(x)\cdot \nu \dd S.\]
Using Young's inequality 
\[-\left(e^u \mu_n\right)^{1/m} \nabla \left(\log (e^u \mu_n)\right)\cdot T(x) \leq \frac{1}{n\varepsilon} |\nabla \left(\log (e^u \mu_n)\right)|^n + \frac{\varepsilon^{m/n}}{m} \ e^u \mu_n |T(x)|^m \quad \forall \vep>0\]
and the fact that $T_{\#}\rho_0=\rho_1$, we get 
\begin{eqnarray}\label{second}
mn^2\varepsilon\int_{B_R} \left(\theta_R\rho_1\right)^{1/m} \dd y -mn\varepsilon\int_{\partial B_R} \mu_n^{1/m} T(x)\cdot \nu \dd S -\frac{n}{m}\varepsilon^m\int_{B_R}|y|^m \theta_R\rho_1 dy\nonumber \\
 \leq \int_{B_R} |\nabla u +\nabla\log\mu_n|^n \dd x. \qquad \qquad \qquad \qquad
\end{eqnarray}
We can estimate the boundary term as in the 2-dimensional case. We have

\begin{align}\label{boundary1}
\bs
\int_{\partial B_R} \mu_n^{1/m} T(x)\cdot \nu \dd S &= \left(\frac{n}{\omega_n}  \right)^ {1/m} \frac{1}{(1+R^m)^{n/m}} \int_{\partial B_R} T(x)\cdot \frac{x}{|x|} \dd S \\
& \leq n^{1/m} \omega_n^ {1/n} \frac{R^n}{(1+R^m)^{n/m}} = n^{1/m} \omega_n^ {1/n}\,\theta_R.
\es
\end{align}
Inserting (\ref{boundary1}) into (\ref{second}), and setting $\rho :=\theta_R \rho_1$, we get for all $ \vep>0$, 
\begin{eqnarray}\label{2nd}
\varepsilon \left[n^2 m \ib \rho^{1/m} \dd y-  n^{1+1/m}m \omega_n^{1/n} \theta_R \right] - \varepsilon^m \frac{n}{m} \ib |y|^m \rho \dd y\ \nonumber \\
\leq \ib |\nabla u +\nabla \log \mu_n|^n \dd x. 
\end{eqnarray}
Now, we introduce the operator $H_n(u,\mu_n)$ in the r.h.s of (\ref{2nd}). We have
\[|\nabla u +\nabla \log \mu_n|^n = H_n(u,\mu_n) +  |\nabla(\log\mu_n))|^n + n |\nabla(\log\mu_n))|^{n-2}\nabla(\log\mu_n) \cdot \nabla u,\]
which, after an integration by parts, yields
\[\ib |\nabla u +\nabla \log \mu_n|^n \dd x = \ib H_n(u,\mu_n) \dd x +\ib |\nabla(\log\mu_n))|^n\dd x   - n\ib u\, \Delta_n(\log\mu_n)\dd x,\]
where $\Delta_n$ is the $n$-Laplacian operator defined by (\ref{eqn n-laplacian}).
 By a direct computation, 
\[\Delta_n(\log\mu_n) = -(nm)^{n-1}\omega_n\mu_n.\]
Then
\[\ib |\nabla u +\nabla \log \mu_n|^n \dd x = \ib H_n(u,\mu_n) \dd x + n^nm^{n-1}\omega_n\ib u\dd\mu_n +\ib |\nabla(\log\mu_n))|^n\dd x,\]
and so (\ref{2nd}) becomes for all $\vep>0$, 
\begin{eqnarray}\label{2nd-bis}
\lefteqn{ \varepsilon \left[n^2 m \ib \rho^{1/m} \dd y-  n^{1+1/m}m \omega_n ^{1/n} \theta_R \right] - \varepsilon^m \frac{n}{m} \ib |y|^m \rho \dd y} \nonumber \\
& & \leq \ib H_n(u,\mu_n) \dd x + n^nm^{n-1}\omega_n\ib u\dd\mu_n +\ib |\nabla(\log\mu_n)|^n\dd x.  
\end{eqnarray}
Next, we focus on the l.h.s of (\ref{2nd-bis}). For convenience, we denote 
\[A_\rho := n^2m \int_{B_R} \rho^{1/m} \dd y -n^{(1+1/m)}m\omega_n^{1/n}\theta_R, \quad B_\rho:= \frac{n}{m}\int_{B_R}|y|^m\rho \dd y,\]
and
\[G_\rho(\vep) := \vep A_\rho - \epsilon^m B_\rho.\]
Then (\ref{2nd-bis}) reads as
\begin{equation}\label{2nd-prime}
G_\rho(\vep) \leq \ib H_n(u,\mu_n) \dd y + n^nm^{n-1}\omega_n\ib u\dd\mu_n +\ib |\nabla(\log\mu_n))|^n\dd y  \quad  \forall \vep>0.
\end{equation}
Clearly, $G_\rho'(\vep) = A_\rho-m\epsilon^{m-1}B_\rho$, so $\max_{\vep>0}  \left[G_\vep(\rho)\right]$ is attained at 
\begin{equation}\label{eqn1.4}
\vep_{max}(\rho) := \left(\frac{A_\rho}{mB_\rho}\right)^{1/(m-1)}.
\end{equation}
In particular if $\rho=\mu_n$, we have
\[\vep_{max}(\mu_n) := \left(\frac{A_{\mu_n}}{mB_{\mu_n}}\right)^{1/(m-1)},\]
where 
\[A_{\mu_n}=n^2m\left(\ib \mu_n^{1/m}\dd y -\left(\frac{\omega_n}{n}\right)^{1/n}\theta_R\right)\]
and
\[B_{\mu_n}=\frac{n^{1+1/n}}{m\omega_n^{1/n}} \left(\ib \mu_n^{1/m}\dd y -\left(\frac{\omega_n}{n}\right)^{1/n}\theta_R\right).\]
Note that we have used above the relation
\begin{equation}\label{new relation}
\ib |y|^m\mu_n \dd x = \left(\frac{n}{\omega_n}\right)^{1/n} \ib \mu_n^{1/m}\dd x - \theta_R.
\end{equation}
Then
\begin{equation}\label{eqn1.5}
\vep_{max}(\mu_n) = (n^{1/m} m\,\omega_n^{1/n})^{1/(m-1)}.
\end{equation}
Choosing $\vep=\vep_{max}(\mu_n)$ in (\ref{2nd-prime}), we have
\[G_\rho\left(\vep_{max}(\mu_n)\right)  - \ib |\nabla(\log\mu_n))|^n\dd x \leq \ib H_n(u,\mu_n) \dd x + n^nm^{n-1}\omega_n\ib u\dd\mu_n,\]
that is, after dividing by $\beta(n)=n^nm^{n-1}\omega_n$,
\begin{eqnarray}\label{maxim}
\lefteqn{\frac{n^2m\,\vep_{max}(\mu_n)}{\beta(n)} \ib \rho^{1/m}\dd y  -\frac{n(\vep_{max}(\mu_n))^m}{m\beta(n)}  \int_{B_R}|y|^m\rho \dd y} \nonumber \\
& & \quad -\frac{1}{\beta(n)} \left[ \ib |\nabla(\log\mu_n))|^n\dd x +  n^{(1+1/m)}m\omega_n^{1/n}\,\vep_{max}(\mu_n)\, \theta_R\right] \nonumber \\
& & \leq \frac{1}{\beta(n)}\ib H_n(u,\mu_n) \dd x + \ib u\dd\mu_n = I_R(u).
\end{eqnarray}
We now simplify the l.h.s of (\ref{maxim}) by using the following basic identities which can be  checked by direct computations:
\[\frac{n^2m\,\vep_{max}(\mu_n)}{\beta(n)} = m(n/\omega_n)^{1/n} =\alpha(n), 
\]
\[\frac{n(\vep_{max}(\mu_n))^m}{m\beta(n)} = 1,\]
\[
 n^{(1+1/m)}m\omega_n^{1/n}\,\vep_{max}(\mu_n) = m^n n^n\omega_n, \]
\[\ib |\nabla(\log\mu_n)|^n\dd x = n^{n-1}m^n\, \omega_n \ib |y|^m\mu_n \dd y,\]
\[\theta_R = \left(\frac{n}{\omega_n}\right)^{1/n} \ib \mu_n^{1/m}\dd y - \ib |y|^m\mu_n \dd y.\]
Then (\ref{maxim}) yields
\[J_R(\rho) - J_R(\mu_n) \leq I_R(u)\]
for all functions $u$ and $\rho$ such that $u\in W^{1,n}_0(B_R)$, $\ib e^u\dd\mu_n = \theta_R$ and $\ib \rho(y) \dd y =\theta_R$.
We conclude the proof as in  Theorem  \ref{theo 2-duality}.
\end{proof}
From Theorem  \ref{theo n-duality}, we obtain the following extension of the 2-dimensional Onofri inequality (\ref{L2 onofri}) to higher dimensions $n\geq 2$.

\begin{thm} ({\bf n-dimensional Euclidean Onofri inequality})\label{theo n-euclidean-onofri}

The following  
holds for all $n\geq 2$:
\begin{equation}\label{Ln onofri}
\frac{1}{\beta (n)}\int_{\R^n} H_n(u,\mu_n) \dd x  +  \int_{\R^n}u\, d\mu_n - \log\left(\int_{\R^n} e^u\,\mbox{d}\mu_n\right)   \geq 0  \quad \forall u\in C^1_c(\R^n),
\end{equation}
where $\dd \mu_n=\mu_n(x)\dd x$ is the probability density on $\R^n$, $\mu_n(x)= \frac{n}{\omega_n (1+|x|^\frac{n}{n-1})^n}$,
$\omega_n$ is the area of the unit sphere in $\R^n$, and $H_n(u,\mu_n)$ is the operator
\[H_n(u,\mu_n):=|\nabla u +\nabla(\log\mu_n)|^n - |\nabla(\log\mu_n))|^n - n |\nabla(\log\mu_n))|^{n-2}\nabla(\log\mu_n) \cdot \nabla u.\]
The optimal constant in this inequality is given by $\beta (n)=n^n (\frac{n}{n-1})^{n-1} \omega_n$.
Moreover $u=0$ is an optimal function.
\end{thm}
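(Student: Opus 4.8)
The plan is to derive (\ref{Ln onofri}) from the duality of Theorem \ref{theo n-duality} in the same way that Theorem \ref{theo 2-euclidean-onofri} follows from Theorem \ref{theo 2-duality}, the only genuinely new work being the passage from normalized, compactly supported test functions to arbitrary $u\in C^1_c(\R^n)$. First I would specialize the supremum in (\ref{eqn duality-n-onofri}) to $\rho=\mu_n$; since $J_R(\mu_n)-J_R(\mu_n)=0$ and that supremum equals $0$, the duality yields
\[\frac{1}{\beta(n)}\ib H_n(u,\mu_n)\dd x + \ib u\dd\mu_n = I_R(u)\geq 0\]
for every $u\in W^{1,n}_0(B_R)$ with $\ib e^u\dd\mu_n=\theta_R$, with equality at $u\equiv 0$. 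This is the exact analogue of (\ref{16}) and is the only output of the transport machinery the argument uses.

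Next I would record the invariance that produces the logarithmic term. Writing $\Phi_n(u)$ for the left-hand side of (\ref{Ln onofri}), one checks that $\Phi_n(u+a)=\Phi_n(u)$ for every constant $a$: indeed $H_n(u+a,\mu_n)=H_n(u,\mu_n)$ since $H_n$ depends on $u$ only through $\nabla u$, and the increment $a$ coming from $\int_{\R^n}(u+a)\dd\mu_n=\int_{\R^n}u\dd\mu_n+a$ is exactly cancelled by $\log\int_{\R^n}e^{u+a}\dd\mu_n=a+\log\int_{\R^n}e^{u}\dd\mu_n$, because $\mu_n$ is a probability density. It therefore suffices to prove $\Phi_n\geq 0$ on the normalized class $\int_{\R^n}e^u\dd\mu_n=1$. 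If such a $u$ is moreover compactly supported, say $\spt u\subset B_{R_0}$, then $e^u\equiv 1$ and $\nabla u\equiv 0$ off $B_{R_0}$, so the normalization reads $\int_{B_{R_0}}(e^u-1)\dd\mu_n=0$, which is precisely $\ib e^u\dd\mu_n=\theta_R$ for all $R>R_0$; moreover $H_n(u,\mu_n)\equiv 0$ outside $B_{R_0}$, so $I_R(u)$ is already independent of $R$ and equals $\frac{1}{\beta(n)}\int_{\R^n}H_n(u,\mu_n)\dd x+\int_{\R^n}u\dd\mu_n$. The displayed inequality then gives $\Phi_n(u)\geq 0$ at once (the logarithmic term being $0$), and the choice $u\equiv 0$, for which $H_n(0,\mu_n)=0$ and $\log\int_{\R^n}\dd\mu_n=0$, shows $\Phi_n(0)=0$, i.e.\ $u\equiv 0$ is extremal.

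The genuine difficulty is the final reduction to arbitrary $u\in C^1_c(\R^n)$, since the constant shift used to normalize destroys compact support: the normalized representative $v:=u-\log\int_{\R^n}e^u\dd\mu_n$ tends to a nonzero constant at infinity and is not admissible in the duality, which demands $W^{1,n}_0(B_R)$. I would approximate $v$ by compactly supported functions $v_R$ via a \emph{logarithmic} truncation ($v_R=v$ on $B_R$, $v_R=0$ outside $B_{R^2}$, affine in $\log|x|$ between), for which the truncation cost $\int_{B_{R^2}\setminus B_R}|\nabla v_R|^n\dd x=O\big((\log R)^{1-n}\big)\to 0$ --- the standard fact that severing a bounded function near infinity is free in the critical exponent $n$ up to a logarithmic loss; since $\nabla(\log\mu_n)=O(|x|^{-1})$ on the same annulus, the remaining terms of $H_n(v_R,\mu_n)$ are controlled identically, while $\int_{\R^n} v_R\dd\mu_n\to\int_{\R^n} v\dd\mu_n$ and $\int_{\R^n} e^{v_R}\dd\mu_n\to 1$ by dominated convergence. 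A small compactly supported correction $v_R\mapsto v_R+w_R$ with $\|w_R\|\to 0$ restores the exact normalization without leaving $C^1_c$, so the previous paragraph applies to each corrected function and, by the continuity of $\Phi_n$ along the sequence, $0\leq\lim\Phi_n(v_R+w_R)=\Phi_n(v)=\Phi_n(u)$. The main obstacle is thus concentrated in this truncation estimate at the critical growth rate, the transport content being entirely absorbed into Theorem \ref{theo n-duality}.
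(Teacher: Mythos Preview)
Your approach is essentially the paper's own: extract $I_R(u)\geq 0$ from Theorem~\ref{theo n-duality}, send $R\to\infty$ so that $\theta_R\to 1$, and read off (\ref{Ln onofri}) together with the extremality of $u\equiv 0$. The paper's proof stops there, simply asserting that the limit $R\to\infty$ yields the inequality ``for all $u\in C^1_c(\R^n)$''; it does not separate out the normalization constraint or discuss how to pass from the constrained class $\int_{B_R}e^u\dd\mu_n=\theta_R$ (equivalently $\int_{\R^n}e^u\dd\mu_n=1$) to arbitrary compactly supported $u$.

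The additional content in your write-up --- the translation invariance $\Phi_n(u+a)=\Phi_n(u)$, the observation that for compactly supported \emph{normalized} $u$ the constraint $\int_{B_R}e^u\dd\mu_n=\theta_R$ is automatically satisfied for all large $R$, and the logarithmic truncation $v_R$ to handle the non-compactly-supported normalized representative $v=u-\log\int e^u\dd\mu_n$ --- is not in the paper, but it is exactly the argument one needs to make the paper's final step honest. Your estimates are correct: $\int_{B_{R^2}\setminus B_R}|\nabla v_R|^n\dd x=O((\log R)^{1-n})$, and since $H_n$ is a second-order Taylor remainder of $z\mapsto |z|^n$ with $|\nabla\log\mu_n|=O(|x|^{-1})$ on the annulus, the $H_n$ contribution there is likewise $o(1)$; the convergences $\int v_R\dd\mu_n\to\int v\dd\mu_n$ and $\int e^{v_R}\dd\mu_n\to 1$ follow by dominated convergence as you say. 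So your proposal is correct and follows the paper's route, while being more careful than the paper about the one technical point the paper leaves implicit.
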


\begin{proof}
It is similar to the proof of Theorem \ref{theo 2-euclidean-onofri}. Indeed, by Theorem \ref{theo n-duality}, we have for all $u\in C^1_c(\R^n)$ with support in $B_R$, such that $\ib e^u\dd\mu_n =\theta_R=\frac{R^n}{(1+R^m)^{n/m}}$,
\begin{equation}\label{new-16}
I_R(u) =  \frac{1}{\beta(n)} \int_{B_R} H_n(u,\mu_n)\dd x + \int_{B_R} u(x)\dd \mu_n(x) \geq 0,
\end{equation}
with equality if $u=0$. Letting $R\to\infty$, we have $\theta_R\to 1$; then (\ref{new-16}) gives
\[\frac{1}{\beta(n)} \int_{\R^n} H_n(u,\mu_n)\dd x + \int_{\R^n} u(x)\dd \mu_n(x) \geq 0 = \log\left(\int_{\R^n} e^u\dd\mu_n\right),\]
for all $u\in C^1_c(\R^n)$, with equality if $u=0$. This completes the proof of the theorem.
\end{proof}
From the proof of Theorem  \ref{theo n-duality}, we can also derive the following inequality:

\begin{coro}\label{coro new inequality}
Let $n\geq 2$ be an integer. For $v\in C^1_c(\R^n)$ with compact support in $B_R\subset \R^n$ for some $R>0$, we have
\begin{equation}\label{coro new eqn1}
\left(\frac{n}{\omega_n}\right)^{\frac{n-1}{n}} \,\frac{1}{(1+R^{\frac{n}{n-1}})^n} \ib e^v \dd x + \frac{n-1}{n^2} \ib |\nabla v|^n \dd x \geq  \ib \mu_n^{\frac{n-1}{n}}\dd y,
\end{equation}
where $\mu_n(x)=\frac{n}{\omega_n(1+|x|^{\frac{n}{n-1}})^n}$ and $\omega_n$ is the area of the unit sphere in $\R^n$.

In particular, if $n=2$, then (\ref{coro new eqn1}) gives
\begin{equation}\label{coro new eqn1-2d}
\ib e^v\dd x + \frac{(1+R^2)^2\sqrt{\pi}}{4} \ib |\nabla v|^2\dd x \geq \pi(1+R^2)^2\log(1+R^2). 
\end{equation}
\end{coro}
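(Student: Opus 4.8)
The plan is to re-run the chain of inequalities in the proof of Theorem \ref{theo n-duality} only up to the Young inequality step, but with two deliberate choices that remove the $H_n$ operator and isolate $\int_{B_R} e^v\,\dd x$. Keeping the notation $\frac1m=1-\frac1n$ of that proof, I would apply Lemma \ref{lemma1} to the two probability densities $\rho_1=\mu_n/\theta_R$ and $\rho_0=e^v/Z$ with $Z:=\int_{B_R}e^v\,\dd x$ (normalized so as to stay within the hypotheses of the lemma), letting $T$ be the optimal map sending $\rho_0$ to $\rho_1$. The first choice makes the left-hand side of Lemma \ref{lemma1} equal to $\theta_R^{-1/m}\int_{B_R}\mu_n^{1/m}\,\dd y$, i.e. the target $\int_{B_R}\mu_n^{(n-1)/n}\,\dd y$ up to the explicit factor $\theta_R^{-1/m}$. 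The second choice is what replaces $\nabla u+\nabla(\log\mu_n)$ by $\nabla v$: since $\log\rho_0=v-\log Z$, one has $\nabla(\log\rho_0)=\nabla v$, so the gradient term produced below is exactly $\int_{B_R}|\nabla v|^n\,\dd x$, with no $\nabla(\log\mu_n)$ correction and hence no $H_n$.

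Next I would follow (\ref{first}) verbatim: integrate $\int_{B_R}\rho_0^{1/m}\,\div T$ by parts and use $\nabla(\rho_0^{1/m})=\tfrac1m\rho_0^{1/m}\nabla v$ to get $mn\int_{B_R}\rho_1^{1/m}\le-\int_{B_R}\rho_0^{1/m}\nabla v\cdot T+m\int_{\partial B_R}\rho_0^{1/m}\,T\cdot\nu\,\dd S$. I would then apply the same Young inequality as in the proof, $-\rho_0^{1/m}\nabla v\cdot T\le\frac{1}{n\varepsilon}|\nabla v|^n+\frac{\varepsilon^{m/n}}{m}\rho_0|T|^m$, and estimate the two geometric terms exactly as in (\ref{boundary1}) via $|T(x)|\le R$ on $B_R$. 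The moment term then contributes $\frac{\varepsilon^{m/n}}{m}R^m Z$ once the normalization $Z=\int_{B_R}e^v\,\dd x$ is reinstated, which is where $\int_{B_R}e^v\,\dd x$ enters linearly; the boundary term is controlled through the value of $\mu_n$ on $\partial B_R$, namely $\mu_n\big|_{|x|=R}=\frac{n}{\omega_n(1+R^m)^n}$. Finally I would pick $\varepsilon$ so that the coefficient of $\int_{B_R}|\nabla v|^n$ collapses to $\frac{n-1}{n^2}=\frac1{mn}$, clear the factor $\theta_R^{-1/m}$, and collect the surviving constants — using $\theta_R=R^n(1+R^m)^{-(n-1)}$ and the moment identity (\ref{new relation}) — into the single prefactor $C_1=\big(\tfrac{n}{\omega_n}\big)^{(n-1)/n}(1+R^m)^{-n}$ multiplying $\int_{B_R}e^v\,\dd x$.

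The step I expect to be the main obstacle is the bookkeeping of the boundary term together with the normalization of $\rho_0$. Unlike in Theorem \ref{theo n-duality}, where the constraint $\int_{B_R}e^u\,\dd\mu_n=\theta_R$ forces $\rho_0$ to be a probability density with $u\big|_{\partial B_R}=0$, so that the boundary value is simply $\mu_n$, here $v$ is a free compactly supported function and keeping $\int_{B_R}e^v\,\dd x$ as an independent quantity forces one to track how the mass $Z$ propagates through the moment and boundary estimates and how the powers of $\theta_R^{-1/m}$ recombine. Extracting precisely the constant $C_1$ — that is, the factor $(\omega_n/n)^{1/n}$ times the boundary value of $\mu_n$ — is exactly the point where the explicit form of $\mu_n$ at $|x|=R$ and the identity (\ref{new relation}) linking $\theta_R$, $\int_{B_R}\mu_n^{1/m}$ and $\int_{B_R}|y|^m\mu_n$ must be invoked; every other manipulation is a direct transcription of the proof of Theorem \ref{theo n-duality}.

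For the planar case I would simply set $n=2$, $m=2$, $\omega_2=2\pi$, so that $C_1=\frac{1}{\sqrt\pi\,(1+R^2)^2}$ and the gradient coefficient is $\frac14$, and then evaluate the right-hand side explicitly: in polar coordinates $\int_{B_R}\mu_2^{1/2}\,\dd y=\frac{1}{\sqrt\pi}\int_{B_R}\frac{\dd y}{1+|y|^2}=\sqrt\pi\,\log(1+R^2)$. Multiplying the resulting inequality through by $\sqrt\pi\,(1+R^2)^2$ turns $C_1\int_{B_R}e^v\,\dd x$ into $\int_{B_R}e^v\,\dd x$, the coefficient $\frac14$ into $\frac{(1+R^2)^2\sqrt\pi}{4}$, and the right-hand side into $\pi(1+R^2)^2\log(1+R^2)$, which is exactly (\ref{coro new eqn1-2d}).
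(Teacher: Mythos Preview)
There is a genuine gap in your bookkeeping of where $Z=\int_{B_R}e^v\,\dd x$ enters, and it is precisely the step you flagged as ``the main obstacle''. After Young's inequality the moment term is $\frac{\varepsilon^{m/n}}{m}\int_{B_R}\rho_0|T|^m\,\dd x$ with $\rho_0=e^v/Z$ a \emph{probability} density. Whether you bound $|T|\le R$ (giving $\frac{\varepsilon^{m/n}}{m}R^m\!\int_{B_R}\rho_0=\frac{\varepsilon^{m/n}}{m}R^m$) or use the push-forward $T_\#\rho_0=\rho_1=\mu_n/\theta_R$ (giving $\frac{\varepsilon^{m/n}}{m}\theta_R^{-1}\int_{B_R}|y|^m\mu_n\,\dd y$), the result is independent of $Z$. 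You cannot ``reinstate'' the normalisation in one term of the inequality without doing so in every other term; in particular $\rho_0^{1/m}$ carries a factor $Z^{-1/m}$, so multiplying through by $Z$ does not undo it. The only place $Z$ actually appears in your scheme is in the boundary term, where $\rho_0^{1/m}\big|_{\partial B_R}=Z^{-1/m}$, and this is not linear in $Z$. So the mechanism you describe for producing a linear $\int_{B_R}e^v$ simply is not there.

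The paper obtains the linear dependence by a different route: it keeps the parametrisation $\rho_0=e^u\mu_n/\theta_R$ with $u\big|_{\partial B_R}=0$ and $\int_{B_R}e^u\,\dd\mu_n=\theta_R$, so that the boundary value of $\rho_0$ is $\mu_n\big|_{\partial B_R}/\theta_R$ independently of $u$; the boundary estimate (\ref{boundary1}) then contributes the fixed constant $n^{1/m}\omega_n^{1/n}\theta_R$. After choosing $\rho=\mu_n$ and $\varepsilon=\varepsilon_{\max}(\mu_n)$ in (\ref{2nd}), this yields $G_{\mu_n}(\varepsilon_{\max})\le\int_{B_R}|\nabla(u+\log\mu_n)|^n$, and only \emph{then} is the change of variable $v=u+\log\mu_n-\log(\mu_n|_{\partial B_R})$ made. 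The constraint on $u$ becomes $\theta_R=\mu_n|_{\partial B_R}\int_{B_R}e^v\,\dd x$, and it is this substitution of $\theta_R$ that produces the linear term in $\int_{B_R}e^v$. Your direct choice $\rho_0=e^v/Z$ is in fact the \emph{same} density as the paper's $e^u\mu_n/\theta_R$ under this change of variables, so you are not taking a different route; you are missing the constraint that makes the route work.
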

\begin{proof}
Choosing $\rho=\mu_n$ and $\vep=\vep_{max}(\mu_n)$ in (\ref{2nd-bis}), we have
\begin{equation}\label{coro new eqn2}
G_{\mu_n}(\vep_{max}(\mu_n)) \leq \ib |\nabla (u+\log\mu_n)|^n \dd x \qquad \forall \vep>0 \;\; \mbox{and}  \;\; u
\end{equation}
such that $\ib e^u\dd\mu_n =  \theta_R$ and $u\vert_{\partial B_R} = 0$.
Using the computations in the proof of Theorem  \ref{theo n-duality} and setting $m:=\frac{n}{n-1}$,  we have
\[G_{\mu_n}(\vep_{max}(\mu_n)) = \frac{A_{\mu_n}^n}{n(mB_{\mu_n})^{n-1}} = mn\left(\ib \mu_n^{1/m}\dd y - \left(\frac{\omega_n}{n}\right)^{1/n}\,\theta_R\right).\]
This gives 
\[ mn\left(\ib \mu_n^{1/m}\dd y - \left(\frac{\omega_n}{n}\right)^{1/n}\,\theta_R\right) \leq \ib |\nabla (u+\log\mu_n)|^n \dd x.\]
Set $v:=u+\log\mu_n-\log\left(\mu_n\vert_{\partial B_R}\right)$. We have 
\[\nabla v= \nabla(u+\log\mu_n), \quad v\vert_{\partial B_R}=0, \quad \theta_R=\ib e^u\dd\mu_n = \mu_n\vert_{\partial B_R} \ib e^v \dd x,\] 
where $\mu_n\vert_{\partial B_R} = \frac{n}{\omega_n(1+R^m)^n}$. Then (\ref{coro new eqn2}) reads as
\begin{equation}\label{coro new eqn3}
mn\left(\ib \mu_n^{1/m}\dd y - \left(\frac{\omega_n}{n}\right)^{1/n}\,\frac{n}{\omega_n(1+R^m)^n} \ib e^v\dd x\right)  \leq \ib |\nabla v|^n \dd x.
\end{equation}
This gives (\ref{coro new eqn1}) after simplification. Using $\ib \sqrt{\mu_2} = \sqrt{\pi}\log(1+R^2)$ where  $B_R\subset \R^2$, we get (\ref{coro new eqn1-2d}).
\end{proof}


{\bf Acknowledgments:} Part of this work is done while the three authors were visiting the Fields Institute for Research in  Mathematical Sciences (Toronto), during the Thematic program on variational problems in Physics, Economics and Geometry. The authors would like to thank this institution for its hospitality. The three authors are partially supported by the Natural Sciences and Engineering Research Council of Canada (NSERC).

\end{document}